\newcommand{\N}{\mathbb{N}}
\newcommand{\R}{\mathbb{R}}
\newtheorem{theorem}{Theorem}
\theoremstyle{plain}
\newtheorem{corollary}{Corollary}
\newtheorem{lemma}{Lemma}
\numberwithin{equation}{section}
\begin{document}
\title[\'{C}iri\'{c}'S Fixed Point Theorem in a cone metric space]{\'{C}iri\'{c}'S Fixed Point Theorem in a cone metric space}
\author{Bessem Samet}
\address{DEPARTMENT OF MATHEMATICS, TUNIS COLLEGE OF SCIENCES AND TECHNIQUES, 5 AVENUE TAHA HUSSEIN, BP, 59, BAB MANARA, TUNIS.}
\email{bessem.samet@gmail.com}
\subjclass[2000]{54H25, 47H10, 34B15}
\keywords{\'{C}iri\'{c}'s theorem; Cone metric space; Fixed point.}
\begin{abstract}
In this paper, we extend a fixed point theorem  due to \'{C}iri\'{c} to a cone metric space.
\end{abstract} \maketitle
\section{Introduction and preliminaries}
Many generalizations of the Banach contraction principle \cite{BANACH} have been considered in the literature (see \cite{A1}-\cite{beg}, \cite{beg2}-\cite{sam}).

Huang and Zhang \cite{Cone} recently have introduced the concept of cone metric space, where the set of real numbers is replaced by an ordered Banach space, and they have established
some fixed point theorems for contractive type mappings in a normal cone metric space. The study of fixed point theorems in such spaces is followed
by some other mathematicians (see \cite{A1}-\cite{beg}, \cite{beg2}, \cite{A2}, \cite{J}, \cite{rit}).

In this paper, we extend  a fixed point theorem  due to \'{C}iri\'{c} (\cite{cir}-Theorem 2.5) to a cone metric space.
Before presenting our result, we start by recalling some definitions.

Let $E$ be a real Banach space and $P$ a subset of $E$. $P$ is called a cone if and only
if:
\begin{itemize}
\item[(i)] $P$ is closed, nonempty, and $P \neq \{0\}$.
\item[(ii)] $a, b \in \mathbb{R}$, $a, b \geq 0$,  $x,y \in P \Rightarrow ‖x + by ―in  P$.
\item[(iii)] $x \in  P$ and $-x \in  P$  $\Rightarrow x = 0$.
\end{itemize}

Given a cone $P \subset E$, we define a partial ordering $\leq $ with respect to $P$ by:
$$
x \leq y  \Leftrightarrow  y - x \in  P.
$$
We shall write $x <y$ to indicate that $x \leq  y$ but $x \neq y$, while $x \ll y$ will
stand for $y - x ―in  int P$ , where $int P$ denotes the interior of $P$.

The cone $P$ is called normal if there is a number $k >0$ such that for all $x,y \in  E$,
$$
0 \leq x \leq  y \Rightarrow  \|x\|\leq k \|y\|,
$$
where $\|\cdot\|$ is the norm in $E$. In this case, the number $k$ is called the normal constant of $P$.
Rezapour and Hamlbarani \cite{rit} proved that there are no normal cones with normal constant $k < 1$ and for
each $c > 1$ there are cones with normal constant $k > c$. For this reason, in all this paper, we take $k\geq 1$.

In the following we always suppose $E$ is a Banach space, $P$ is a cone in $E$ with $\mbox{ int }P \neq \emptyset$ and $\leq $ is partial ordering with respect to $P$.
As it has been defined in \cite{Cone}, a function $d : X ―times X ―rightarrow E$ is
called a cone metric on $X$ if it satisfies the following conditions:

\begin{itemize}
\item[(a)]  $0<d(x,y)$ for all $x,y ―in  X$, $x\neq y$  and $d(x, y) = 0$ if and only if $x = y$.
\item[(b)] $d(x, y) = d(y, x)$ for all $x,y ―in  X$.
\item[(c)] $d(x, y) \leq  d(x, z) + d(z,y)$ for all $x, y, z \in X$.
\end{itemize}
Then $(X, d)$ is called a cone metric space.

Let $(x_n)$ be a sequence in $X$ and $x\in X$.
\begin{itemize}
\item If for every $c\in E$, $c\gg0 $  there is $N$ such that for all $n>N$, $d(x_n, x)\ll c$, then $(x_n)$ is
said to be convergent to $x$ and $x$ is the limit of $(x_n)$. We denote this by $x_n\rightarrow x \mbox{ as } n\rightarrow +\infty$.
 \item If for any $c \in E$ with $0 \ll c$, there is $N$ such that for all $n,m > N$, $d(x_n, x_m) \ll c$, then $(x_n)$ is called a
Cauchy sequence in $X$.
\end{itemize}

Let $(X,d)$ be a cone metric space. If every Cauchy sequence is convergent in $X$, then $X$ is called a complete cone metric space.

The following lemmas  will be  useful later.
\begin{lemma}(Huang and Zhang \cite{Cone})
Let $(X, d)$ be a cone metric space, $P$ be a normal cone.
Let $(x_n)$ be a sequence in $X$. Then $(x_n)$ converges to $x$ if and only if $\|d(x_n, x)\|\rightarrow 0$ as $n\rightarrow+\infty$.
\end{lemma}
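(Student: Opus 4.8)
The plan is to prove the two implications separately, straight from the definitions, using the normality of $P$ for the forward direction and the openness of $\mbox{int}\,P$ for the reverse one; the whole content of the lemma is the passage between the order relation $\ll$ and the norm.

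For the forward implication, suppose $x_n\to x$ and let $\varepsilon>0$ be given. First I would manufacture a vector $c$ with $0\ll c$ and $\|c\|$ as small as desired: fix any $e\in\mbox{int}\,P$; since $P$ is a cone it is invariant under positive scaling, and from this one checks $\lambda e\in\mbox{int}\,P$ for every $\lambda>0$, so putting $c:=\lambda e$ with $\lambda$ small enough gives $0\ll c$ and $k\|c\|<\varepsilon$, where $k$ is the normal constant. Applying the definition of convergence to this $c$ produces an $N$ with $d(x_n,x)\ll c$ for all $n>N$. Since $\mbox{int}\,P\subseteq P$ this yields $0\le d(x_n,x)\le c$ (the left inequality being property (a)), and normality gives $\|d(x_n,x)\|\le k\|c\|<\varepsilon$. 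Hence $\|d(x_n,x)\|\to 0$.

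For the reverse implication, suppose $\|d(x_n,x)\|\to 0$ and let $c\in E$ with $0\ll c$ be arbitrary. Because $\mbox{int}\,P$ is open and contains $c$, there is $\delta>0$ with $B(c,\delta)\subseteq\mbox{int}\,P$. Choosing $N$ so that $\|d(x_n,x)\|<\delta$ for all $n>N$, the point $c-d(x_n,x)$ then lies within distance $\delta$ of $c$, hence $c-d(x_n,x)\in\mbox{int}\,P$, i.e. $d(x_n,x)\ll c$; as $c$ was arbitrary, $x_n\to x$. The only delicate points—and the expected main obstacle—are the two bridges just used: securing an interior point of arbitrarily small norm (which really needs both $\mbox{int}\,P\neq\emptyset$ and scale-invariance of the cone) and correctly invoking normality, which requires noting that $\ll$ implies $\le$ so that $0\le d(x_n,x)\le c$ may legitimately be fed into the normality estimate. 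The hypothesis $k\ge 1$ plays no role beyond that of a harmless multiplicative constant.
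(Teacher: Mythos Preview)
Your argument is correct and is in fact the standard one found in the cited reference of Huang and Zhang. Note, however, that the present paper does not supply a proof of this lemma at all: it is merely quoted from \cite{Cone} as a preliminary fact, so there is no ``paper's own proof'' to compare against. Your write-up could therefore serve as the missing justification; both directions are handled cleanly, and you have correctly isolated where each hypothesis enters --- normality (together with the existence of interior points of arbitrarily small norm, obtained by scaling) for the forward implication, and openness of $\mathrm{int}\,P$ alone for the converse.
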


\begin{lemma}(Huang and  Zhang \cite{Cone})
Let $(X, d)$ be a cone metric space, $(x_n)$ be a sequence in $X$. If $(x_n)$ is convergent, then it is a Cauchy sequence, too.
\end{lemma}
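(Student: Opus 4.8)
The plan is to derive this directly from the triangle inequality (c) and the definition of convergence, together with two elementary facts about the cone $P$; normality is not needed, so Lemma~1 plays no role here.

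First I would record the auxiliary observations about the relation $\ll$. Since multiplication by $\tfrac12$ is a linear homeomorphism of $E$ that carries $P$ onto $P$, it carries $\operatorname{int}P$ onto $\operatorname{int}P$; hence $0\ll c$ implies $0\ll \tfrac{c}{2}$. Next, if $u\in\operatorname{int}P$ and $p\in P$, choose a neighbourhood $U$ of $0$ with $u+U\subseteq P$; then $(u+p)+U=(u+U)+p\subseteq P+p\subseteq P$, so $u+p\in\operatorname{int}P$. Thus $\operatorname{int}P+P\subseteq\operatorname{int}P$, and in particular $\operatorname{int}P+\operatorname{int}P\subseteq\operatorname{int}P$. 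Finally, if $u\le v$ and $v\ll c$, then
$$ c-u=(c-v)+(v-u)\in\operatorname{int}P+P\subseteq\operatorname{int}P, $$
i.e. $u\ll c$.

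Now let $x$ be the limit of $(x_n)$ and fix any $c\in E$ with $0\ll c$. By the first observation $\tfrac{c}{2}\gg0$, so by the definition of convergence there is $N$ with $d(x_n,x)\ll\tfrac{c}{2}$ for all $n>N$. Take $m,n>N$. Using symmetry (b) and the triangle inequality (c),
$$ d(x_n,x_m)\le d(x_n,x)+d(x,x_m). $$
Since $d(x_n,x)\ll\tfrac{c}{2}$ and $d(x,x_m)=d(x_m,x)\ll\tfrac{c}{2}$, we get
$$ c-\bigl(d(x_n,x)+d(x,x_m)\bigr)=\Bigl(\tfrac{c}{2}-d(x_n,x)\Bigr)+\Bigl(\tfrac{c}{2}-d(x,x_m)\Bigr)\in\operatorname{int}P+\operatorname{int}P\subseteq\operatorname{int}P, $$
that is $d(x_n,x)+d(x,x_m)\ll c$. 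Combining this with the previous display and the last of the three observations yields $d(x_n,x_m)\ll c$. As $c\gg0$ was arbitrary, $(x_n)$ is a Cauchy sequence.

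The argument is essentially bookkeeping, and I expect the only delicate point to be the manipulation of $\ll$: one must actually justify "$0\ll c\Rightarrow 0\ll\tfrac{c}{2}$", the stability of $\operatorname{int}P$ under addition, and "$u\le v\ll c\Rightarrow u\ll c$" rather than use them tacitly. Once these are in place the proof is immediate.
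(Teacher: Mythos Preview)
Your proof is correct. The paper, however, does not give its own proof of this lemma at all: it merely records the statement as a preliminary fact, attributing it to Huang and Zhang \cite{Cone}. So there is nothing to compare against in terms of approach.

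For what it is worth, your argument is exactly the standard one (and essentially the one in \cite{Cone}): halve the given $c$, use convergence to control $d(x_n,x)$ and $d(x_m,x)$, and combine via the triangle inequality. Your explicit verification of the three order-theoretic facts --- $0\ll c\Rightarrow 0\ll c/2$, $\operatorname{int}P+P\subseteq\operatorname{int}P$, and $u\le v\ll c\Rightarrow u\ll c$ --- is more careful than most treatments, which tend to use these silently. Note in particular that you do not need normality of the cone, whereas Lemmas~1 and~3 in the paper do; this is worth pointing out, since some sources state the present lemma only for normal cones.
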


\begin{lemma}(Huang and Zhang \cite{Cone})
Let $(X, d)$ be a cone metric space, $P$ be a normal cone. Let $(x_n)$ be a sequence in $X$.
Then, $(x_n)$ is a Cauchy sequence if and only if $\|d(x_n,x_m)\|\rightarrow 0$ as $n,m \rightarrow +\infty$.
\end{lemma}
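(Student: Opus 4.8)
The plan is to prove the two implications of the equivalence separately, treating the cone order and the norm topology of $E$ side by side. The common ingredient will be the standing assumption $\operatorname{int} P \neq \emptyset$ together with the fact that, for each real $\lambda > 0$, multiplication by $\lambda$ is a homeomorphism of $E$ that maps $P$ onto $P$, and therefore $\operatorname{int} P$ onto $\operatorname{int} P$; hence $P$ possesses interior points of arbitrarily small norm. The forward direction will also use normality of $P$, while the reverse direction turns out to be purely topological and will not.

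First I would treat ``$(x_n)$ Cauchy $\Rightarrow \|d(x_n,x_m)\| \to 0$''. Given $\varepsilon > 0$, I would fix any $c_0 \in \operatorname{int} P$ and rescale it to obtain $c$ with $0 \ll c$ and $k\|c\| < \varepsilon$. Applying the Cauchy condition to this particular $c$ produces $N$ with $d(x_n,x_m) \ll c$ for all $n,m > N$; since $0 \le d(x_n,x_m)$ by axiom (a) and $d(x_n,x_m) \ll c$ forces $d(x_n,x_m) \le c$, normality gives $\|d(x_n,x_m)\| \le k\|c\| < \varepsilon$ for all $n,m > N$, which is what is wanted.

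For the converse, given $c$ with $0 \ll c$, i.e.\ $c \in \operatorname{int} P$, I would choose a radius $r > 0$ such that the open ball of center $c$ and radius $r$ is contained in $\operatorname{int} P$; then $\|y\| < r$ implies $c - y \in \operatorname{int} P$, that is, $y \ll c$. Feeding this $r$ into the hypothesis $\|d(x_n,x_m)\| \to 0$ yields $N$ with $\|d(x_n,x_m)\| < r$, hence $d(x_n,x_m) \ll c$, for all $n,m > N$; since $c$ was an arbitrary element with $0 \ll c$, this is exactly the Cauchy property.

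I do not expect a genuine obstacle. The two delicate points, both of a bookkeeping nature, are: in the first implication, manufacturing an interior point $c$ of $P$ with prescribed small norm (which is where $\operatorname{int} P \neq \emptyset$ and positive rescaling enter) and recalling that $\ll$ implies $\le$ so that normality may be applied; and in the second implication, remembering that ``$0 \ll c$'' is by definition an \emph{open} condition on $c$, which is precisely the feature that lets norm-smallness of $d(x_n,x_m)$ be converted back into the cone relation $d(x_n,x_m) \ll c$.
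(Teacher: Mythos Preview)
Your proof is correct in both directions. Note, however, that the paper does not supply its own proof of this lemma: it is stated as a preliminary result quoted from Huang and Zhang \cite{Cone}, so there is no in-paper argument to compare against. Your argument is in fact the standard one found in that reference.
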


We denote $\mathcal{L}(E)$ the set of  linear bounded operators on $E$, endowed with the following norm:
$$
\|S\|=\sup_{x\in E, x\neq 0} \frac{\|Sx\|}{\|x\|},\,\,\forall\,S\in \mathcal{L}(E).
$$
It is clear  that if $S\in \mathcal{L}(E)$, we have:
$$
\|Sx\|\leq \|S\|\|x\|,\,\,\forall\,x\in E.
$$
We denote by $I: E\rightarrow E$ the identity operator, i.e., $Ix=x,\,\forall\,x\in X$.
If $S\in \mathcal{L}(E)$, we denote by $S^{-1}\in \mathcal{L}(E)$ (if such operator exists) the operator defined by:
$$
S^{-1}Sx=SS^{-1}x=x,\,\,\forall\,x\in E.
$$

\section{Fixed point theorem}
The main result of this paper is the following.
\begin{theorem}\label{T}
Let $(X, d)$ be a complete cone metric space, $P$ be a normal cone with normal constant $k$ ($k\geq 1$).
Suppose the mapping $T: X\rightarrow X$ satisfies the following contractive condition:
\begin{eqnarray}
\label{I} d(Tx,T y)&\leq & A_1(x,y)d(x,y)+A_2(x,y)d(x,Tx)+A_3(x,y)d(y,Ty)\\
\nonumber && +A_4(x,y)d(x,Ty) +A_4(x,y)d(y,Tx),
\end{eqnarray}
for all $x, y\in X$, where $A_i: X\times X \rightarrow \mathcal{L}(E)$, $i=1,\cdots,4$.
Further, assume that for all $x, y\in X$, we have: 
\begin{eqnarray}
\label{i1} \exists\, \alpha\in [0,1/k)\,|\, \sum_{i=1}^4 \|A_i(x,y)\|+\|A_4(x,y)\| \leq \alpha \\
\label{i2}\exists\, \beta\in [0,1)\,|\,\|S(x,y)\|\leq \beta\\
\label{i3} (A_1(x,y)+A_2(x,y))(P)\subseteq P\\
\label{hb} A_2(x,y)(P)\subseteq P\\
\label{i4} A_4(x,y)(P)\subseteq P\\
\label{i5} (I-A_3(x,y)-A_4(x,y))^{-1}(P)\subseteq P.
\end{eqnarray}
Here, $S: X\times X \rightarrow \mathcal{L}(E)$ is given by:
$$
S(x,y)=(I-A_3(x,y)-A_4(x,y))^{-1} (A_1(x,y)+A_2(x,y)+A_4(x,y)),\,\,\forall\,x,y\in X.
$$
Then, $T$ has a unique fixed point.
\end{theorem}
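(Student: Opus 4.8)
The plan is the classical Picard scheme: iterate $T$ from an arbitrary point, show the orbit is Cauchy, and identify its limit as the unique fixed point; the only real work is to push the vector (cone) inequalities through. First I would record two structural consequences of the hypotheses. From \eqref{i1} one has $\|A_3(x,y)+A_4(x,y)\|\le\alpha<1$, so $(I-A_3(x,y)-A_4(x,y))^{-1}$ exists as a Neumann series with $\|(I-A_3(x,y)-A_4(x,y))^{-1}\|\le(1-\alpha)^{-1}$; in particular $S(x,y)$ is well defined. From \eqref{i3} and \eqref{i4} one gets $(A_1(x,y)+A_2(x,y)+A_4(x,y))(P)\subseteq P$, and combining this with \eqref{i5} gives $S(x,y)(P)\subseteq P$; together with \eqref{hb} and \eqref{i4} this means that $S(x,y)$, $A_2(x,y)$, $A_4(x,y)$ and $I+A_2(x,y)+A_4(x,y)$ are all positive operators, hence order preserving for $\le$.

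Next I would fix $x_0\in X$, set $x_{n+1}=Tx_n$ and $u_n=d(x_n,x_{n+1})$. Taking $x=x_{n-1}$, $y=x_n$ in \eqref{I} and using $d(x_n,Tx_{n-1})=d(x_n,x_n)=0$ yields $u_n\le(A_1+A_2)u_{n-1}+A_3u_n+A_4\,d(x_{n-1},x_{n+1})$, where $A_i:=A_i(x_{n-1},x_n)$; the triangle inequality $d(x_{n-1},x_{n+1})\le u_{n-1}+u_n$ and $A_4(P)\subseteq P$ then give $(I-A_3-A_4)u_n\le(A_1+A_2+A_4)u_{n-1}$, whence $u_n\le S(x_{n-1},x_n)u_{n-1}$ after applying the order-preserving operator $(I-A_3-A_4)^{-1}$. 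Since every $S(\cdot,\cdot)$ is positive, this iterates to $0\le u_n\le S(x_{n-1},x_n)\cdots S(x_0,x_1)u_0$, and normality together with submultiplicativity of the operator norm on $\mathcal{L}(E)$ and \eqref{i2} gives $\|u_n\|\le k\beta^n\|u_0\|$. For $m>n$, $d(x_n,x_m)\le\sum_{j=n}^{m-1}u_j$ gives $\|d(x_n,x_m)\|\le k\sum_{j\ge n}\|u_j\|\le\frac{k^2\beta^n}{1-\beta}\|u_0\|\to0$, so $(x_n)$ is a Cauchy sequence and, by completeness, $x_n\to x^*$ for some $x^*\in X$.

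To show $Tx^*=x^*$ I would take $x=x_n$, $y=x^*$ in \eqref{I}, then use $d(x^*,Tx^*)\le d(x^*,x_{n+1})+d(x_{n+1},Tx^*)$, the estimates $d(x_n,Tx^*)\le d(x_n,x^*)+d(x^*,Tx^*)$ and $u_n\le d(x_n,x^*)+d(x^*,x_{n+1})$, and the positivity of $A_2(x_n,x^*)$, $A_4(x_n,x^*)$ to collect all copies of $d(x^*,Tx^*)$ on the left, reaching $(I-A_3-A_4)d(x^*,Tx^*)\le(I+A_2+A_4)d(x^*,x_{n+1})+(A_1+A_2+A_4)d(x_n,x^*)$ with coefficients evaluated at $(x_n,x^*)$. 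Applying $(I-A_3-A_4)^{-1}$, then taking norms and using the bounds above, I get $\|d(x^*,Tx^*)\|\le k\bigl(\tfrac{1+2\alpha}{1-\alpha}\|d(x^*,x_{n+1})\|+\beta\|d(x_n,x^*)\|\bigr)$, and the right-hand side tends to $0$ because $x_n\to x^*$, so $d(x^*,Tx^*)=0$. For uniqueness, if also $Ty^*=y^*$ then \eqref{I} at $(x^*,y^*)$ collapses to $d(x^*,y^*)\le(A_1(x^*,y^*)+2A_4(x^*,y^*))d(x^*,y^*)$; normality plus $\|A_1(x^*,y^*)\|+2\|A_4(x^*,y^*)\|\le\alpha$ from \eqref{i1} give $\|d(x^*,y^*)\|\le k\alpha\|d(x^*,y^*)\|$, and $k\alpha<1$ forces $x^*=y^*$.

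The step I expect to be the main obstacle is precisely this rearranging of vector inequalities. Because $E$ carries only a partial order (no lattice operations, and the operators $A_i$ need not commute), moving the ``self-referential'' terms ($u_n$ in the iteration step, $d(x^*,Tx^*)$ in the fixed-point step) to one side and then inverting $I-A_3-A_4$ is legitimate only thanks to the positivity hypotheses \eqref{i3}--\eqref{i5} and \eqref{hb}, applied in a definite order, and thanks to the Neumann-series control furnished by \eqref{i1}. The two scalar conditions then divide the remaining labour cleanly: \eqref{i2} drives the geometric decay making the orbit Cauchy, while \eqref{i1}, through $k\alpha<1$, is exactly what kills $d(x^*,y^*)$ in the uniqueness argument.
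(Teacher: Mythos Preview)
Your proof is correct and follows essentially the same route as the paper: the iteration estimate $u_n\le S(x_{n-1},x_n)u_{n-1}$, the Cauchy bound $\|d(x_n,x_m)\|\le k^2\beta^n(1-\beta)^{-1}\|u_0\|$, and the uniqueness argument via $k\alpha<1$ are all identical to the paper's. The only minor variation is in identifying the limit as a fixed point: you bound $\|d(x^*,Tx^*)\|$ directly by staying in the cone order (via \eqref{i5}) and then passing to norms, whereas the paper instead applies \eqref{I} with the arguments swapped ($x=u$, $y=x_n$), passes to norms earlier, shows $\|d(Tu,Tx_n)\|\to0$ using only $k\alpha<1$ from \eqref{i1}, and concludes $Tu=u$ from uniqueness of limits---both arguments are equally valid and use the same ingredients.
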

\begin{proof} Let $x\in X$ be arbitrary and define the sequence $(x_n)_{n\in \N}\subset X$ by:
$$
x_0=x, x_1=Tx_0, \cdots, x_n=Tx_{n-1}=T^nx_0,\cdots
$$
By (\ref{I}), we get:
\begin{eqnarray*}
d(x_n,x_{n+1})&=&d(Tx_{n-1},Tx_n)\\
&\leq &  A_1(x_{n-1},x_n)d(x_{n-1},x_n)+A_2(x_{n-1},x_n)d(x_{n-1},x_n)\\
&& +A_3(x_{n-1},x_n)d(x_n,x_{n+1})+A_4(x_{n-1},x_n)d(x_{n-1},x_{n+1})\\
&& +A_4(x_{n-1},x_n)d(x_n,x_n)\\
&=& (A_1(x_{n-1},x_n)+A_2(x_{n-1},x_n))d(x_{n-1},x_n)+A_3(x_{n-1},x_n)d(x_n,x_{n+1})\\
&& +A_4(x_{n-1},x_n) d(x_{n-1},x_{n+1}).
\end{eqnarray*}

Using the triangular inequality, we get:
$$
d(x_{n-1},x_{n+1})\leq d(x_{n-1},x_{n})+d(x_{n},x_{n+1}),
$$
i.e.,
$$
d(x_{n-1},x_{n})+d(x_{n},x_{n+1})-d(x_{n-1},x_{n+1})\in P.
$$
From (\ref{i4}), it follows that:
$$
A_4(x_{n-1},x_n)[d(x_{n-1},x_{n})+d(x_{n},x_{n+1})-d(x_{n-1},x_{n+1})]\in P,
$$
i.e.,
$$
A_4(x_{n-1},x_n) d(x_{n-1},x_{n+1})\leq A_4(x_{n-1},x_n) d(x_{n-1},x_{n})+A_4(x_{n-1},x_n) d(x_{n},x_{n+1}).
$$
Then, we have:
\begin{eqnarray*}
d(x_n,x_{n+1})&\leq & (A_1(x_{n-1},x_n)+A_2(x_{n-1},x_n)+A_4(x_{n-1},x_n))d(x_{n-1},x_n)\\
&&+ (A_3(x_{n-1},x_n)+A_4(x_{n-1},x_n))d(x_n,x_{n+1}).
\end{eqnarray*}
Hence,
\begin{eqnarray*}
(I-A_3(x_{n-1},x_n)-A_4(x_{n-1},x_n))d(x_n,x_{n+1})&\leq &(A_1(x_{n-1},x_n)+A_2(x_{n-1},x_n)\\
&&+A_4(x_{n-1},x_n))d(x_{n-1},x_n).
\end{eqnarray*}
Using (\ref{i5}), we get:
\begin{equation}\label{hes}
d(x_n,x_{n+1})\leq S(x_{n-1},x_n)d(x_{n-1},x_n).
\end{equation}
It is not difficult to see that under hypotheses (\ref{i3}), (\ref{i4}) and (\ref{i5}), we have:
$$
S(x,y)(P)\subseteq P,\,\,\forall\,x,y\in X.
$$
Using this remark, (\ref{hes}) and proceeding by iterations, we get:
$$
d(x_n,x_{n+1})\leq S(x_{n-1},x_n) S(x_{n-2},x_{n-1})\cdots S(x_0,x_1)d(x_0,x_1),
$$
which implies by (\ref{i2}) that:
$$
\|d(x_n,x_{n+1})\|\leq k  \|S(x_{n-1},x_n)\|  \|S(x_{n-2},x_{n-1})\|\cdots \|S(x_0,x_1)\| \|d(x_0,x_1)\|\leq k \beta^n\|d(x_0,x_1)\|.
$$
For any positive integer $p$, we have:
$$
d(x_n,x_{n+p}) \leq \sum_{i=1}^p d(x_{n+i-1},x_{n+i}),
$$
which implies that:
\begin{eqnarray}
\nonumber\|d(x_n,x_{n+p})\|&\leq& k \sum_{i=1}^p \|d(x_{n+i-1},x_{n+i})\|\\
\nonumber &\leq &   k^2 \sum_{i=1}^p \beta^{n+i-1} \|d(x_0,x_1)\|\\
\label{mab}&\leq & k^2\frac{\beta^n}{1-\beta} \|d(x_0,x_1)\|.
\end{eqnarray}
Since $\beta\in [0,1)$, $\beta^n\rightarrow 0$ as $n\rightarrow +\infty$. So from (\ref{mab}) it follows
that the sequence $(x_n)_{n\in \N}$ is Cauchy. Since $(X,d)$ is complete, there is a point $u\in X$ such that:
\begin{equation}\label{saber}
\lim_{n\rightarrow +\infty} d(Tx_n,u)=\lim_{n\rightarrow +\infty} d(x_n,u)=\lim_{n\rightarrow +\infty}d(x_n,x_{n+1})=0.
\end{equation}
Now, using the contractive condition (\ref{I}), we get:
\begin{eqnarray*}
d(Tu,Tx_n)&\leq& A_1(u,x_n)d(u,x_n)+A_2(u,x_n)d(u,Tu)\\
&& +A_3(u,x_n)d(x_n,x_{n+1})+A_4(u,x_n)d(u,x_{n+1})\\
&& +A_4(u,x_n)d(x_n,Tu).
\end{eqnarray*}
By the triangular inequality, we have:
\begin{eqnarray*}
d(u,Tu)&\leq& d(u,x_{n+1})+d(x_{n+1},Tu)\\
d(x_n,Tu) &\leq & d(x_n,Tx_n)+d(Tx_n,Tu).
\end{eqnarray*}
By (\ref{hb}) and (\ref{i4}), we get:
\begin{eqnarray*}
A_2(u,x_n)d(u,Tu)&\leq& A_2(u,x_n)(d(u,x_{n+1})+d(x_{n+1},Tu))\\
A_4(u,x_n)d(x_n,Tu) &\leq& A_4(u,x_n)d(x_n,Tx_n)+A_4(u,x_n)d(Tx_n,Tu).
\end{eqnarray*}
Hence,
\begin{eqnarray*}
d(Tu,Tx_n)&\leq&A_1(u,x_n)d(u,x_n)+(A_2(u,x_n)+A_4(u,x_n))d(u,x_{n+1})\\
&&+ (A_2(u,x_n)+A_4(u,x_n))d(x_{n+1},Tu)\\&&+(A_3(u,x_n)+A_4(u,x_n))d(x_n,x_{n+1}).
\end{eqnarray*}
Using (\ref{i1}), this inequality implies that:
$$
\|d(Tu,Tx_n)\|\leq \frac{k\alpha}{1-k\alpha} (\|d(u,x_n)\|+\|d(u,x_{n+1})\|+\|d(x_n,x_{n+1})\|).
$$
From (\ref{saber}), it follows immediately that:
\begin{equation}\label{kh}
\lim_{n\rightarrow +\infty} d(Tu,Tx_n)=0.
\end{equation}
Then, (\ref{saber}), (\ref{kh}) and the uniqueness of the limit imply that $u=Tu$, i.e., $u$ is a fixed point of $T$.
So we proved that $T$ has least one fixed point $u\in X$.

Now, if $v\in X$ is another fixed point of $T$, by (\ref{I}), we get:
$$
d(u,v)=d(Tu,Tv)\leq A_1(u,v)d(u,v)+2A_4(u,v)d(u,v),
$$
which implies that:
$$
\|d(u,v)\|\leq k(\|A_1(u,v)\|+2\|A_4(u,v)\|)\|d(u,v)\|\leq k\alpha \|d(u,v)\|,
$$
i.e.,
$$
(1-k\alpha)\|d(u,v)\|\leq 0.
$$
Since $0\leq \alpha<1/k$, we get $d(u,v)=0$, i.e., $u=v$. So the proof of the theorem is complete.
\end{proof}

Now, we will show that Theorem 2.5 of \'{C}iri\'{c} \cite{cir} is a particular case of Theorem \ref{T}.
\begin{corollary}
Let $(X, d)$ be a complete metric space and $T: X\rightarrow X$ be a mapping satisfying the following contractive condition:
\begin{eqnarray}
\label{cc} d(Tx,Ty)&\leq & a_1(x,y)d(x,y)+a_2(x,y)d(x,Tx)+a_3(x,y)d(y,Ty)\\
\nonumber &&+a_4(x,y)(d(x,Ty)+d(y,Tx)),
\end{eqnarray}
for all $x,y\in X$, where $a_i: X\times X\rightarrow [0, +\infty)$, $i=1,\cdots,4$ and
$\displaystyle\sum_{i=1}^4\alpha_i(x,y)+\alpha_4(x,y)\leq \alpha$ for each $x, y\in X$ and some $\alpha\in [0,1)$.
Then, $T$ has a unique fixed point.
\end{corollary}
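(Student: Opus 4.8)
The plan is to realise the metric space $(X,d)$ as a cone metric space over the one--dimensional Banach space $E=\R$ and then to invoke Theorem~\ref{T}.

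First I would fix $E=\R$ with the absolute value as norm and $P=[0,+\infty)$. This $P$ is a closed cone with $\mathrm{int}\,P=(0,+\infty)\neq\emptyset$, and it is normal with normal constant $k=1\ (\geq 1)$, since $0\le x\le y$ forces $|x|\le|y|$ in $\R$. With this choice $d\colon X\times X\to E$ is a cone metric on $X$ and $(X,d)$ is a complete cone metric space. The key point is that $\mathcal{L}(\R)$ is canonically isometric to $\R$: every $S\in\mathcal{L}(\R)$ is multiplication by the scalar $S(1)$, with $\|S\|=|S(1)|$, composition of operators corresponds to multiplication of scalars, and $S$ is invertible iff $S(1)\neq 0$, the inverse being multiplication by $S(1)^{-1}$. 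Under this identification I would set $A_i(x,y)\in\mathcal{L}(\R)$ to be multiplication by $a_i(x,y)\ge 0$, $i=1,\dots,4$. Then \eqref{I} becomes literally \eqref{cc}, because the two $A_4$--terms in \eqref{I} combine to $a_4(x,y)\bigl(d(x,Ty)+d(y,Tx)\bigr)$.

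Next I would verify the hypotheses \eqref{i1}--\eqref{i5} one by one. Since $k=1$, the interval $[0,1/k)$ in \eqref{i1} is $[0,1)$, and the coefficient bound of the corollary gives $\sum_{i=1}^4\|A_i(x,y)\|+\|A_4(x,y)\|=\sum_{i=1}^4 a_i(x,y)+a_4(x,y)\le\alpha<1$, so \eqref{i1} holds. Conditions \eqref{i3}, \eqref{hb} and \eqref{i4} are immediate, since multiplication by a nonnegative scalar maps $[0,+\infty)$ into itself. For \eqref{i5}, note that $I-A_3(x,y)-A_4(x,y)$ is multiplication by $1-a_3(x,y)-a_4(x,y)$; since $a_3+a_4\le\alpha<1$ this scalar lies in $(0,1]$, so the inverse operator exists and is multiplication by $\bigl(1-a_3(x,y)-a_4(x,y)\bigr)^{-1}>0$, which preserves $P$.

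The only step requiring a genuine (if short) computation --- and hence the main obstacle --- is \eqref{i2}. Here $S(x,y)$ is multiplication by
$$
s(x,y)=\frac{a_1(x,y)+a_2(x,y)+a_4(x,y)}{1-a_3(x,y)-a_4(x,y)},
$$
so $\|S(x,y)\|=s(x,y)$, and I must exhibit $\beta\in[0,1)$ with $s(x,y)\le\beta$ uniformly in $x,y$. Writing $t=a_3(x,y)+a_4(x,y)\in[0,\alpha]$ and using $a_1+a_2+a_4\le\alpha-a_3-a_4=\alpha-t$ (which is exactly the coefficient bound of the corollary), I obtain $s(x,y)\le(\alpha-t)/(1-t)$. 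Since $\alpha<1$, the map $t\mapsto(\alpha-t)/(1-t)$ has derivative $(\alpha-1)/(1-t)^2<0$ on $[0,\alpha]$, hence is non-increasing, so $s(x,y)\le(\alpha-0)/(1-0)=\alpha$. Thus \eqref{i2} holds with $\beta=\alpha$. All hypotheses of Theorem~\ref{T} being verified, $T$ has a unique fixed point, which is the assertion of the corollary.
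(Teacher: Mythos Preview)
Your proof is correct and follows essentially the same approach as the paper: set $E=\R$, $P=[0,+\infty)$, $k=1$, define $A_i(x,y)$ as multiplication by $a_i(x,y)$, and verify the hypotheses of Theorem~\ref{T}. The only cosmetic difference is in checking \eqref{i2}: the paper observes directly that $a_1+a_2+a_4+\alpha a_3+\alpha a_4\le a_1+a_2+a_3+2a_4\le\alpha$ (since $\alpha<1$) and rearranges, whereas you reach the same bound $\|S(x,y)\|\le\alpha$ via the monotonicity of $t\mapsto(\alpha-t)/(1-t)$.
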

\begin{proof}
We take $E=\R$ (with the usual norm) and $P=[0,+\infty)$. Then, $(X,d)$ is a complete cone metric space and $P$ is a normal cone with
normal constant $k=1$. For each $i=1,\cdots,4$, we define $A_i: X\times X\rightarrow \mathcal{L}(E)$ by:
$$
A_i(x,y): t\in \R\mapsto a_i(x,y)t,
$$
for all $x,y\in X$. let us check now that all the required hypotheses of Theorem \ref{T} are satisfied.
\begin{itemize}
\item[$\bullet$] Condition (\ref{cc}) implies that:
\begin{eqnarray*}
d(Tx,T y)&\leq & A_1(x,y)d(x,y)+A_2(x,y)d(x,Tx)+A_3(x,y)d(y,Ty)\\
&& +A_4(x,y)d(x,Ty) +A_4(x,y)d(y,Tx),
\end{eqnarray*}
for all $x, y\in X$. Then, condition (\ref{I}) of Theorem \ref{T} is satisfied.
\item[$\bullet$] For all $i=1,\cdots,4$, we have:
$$
\|A_i(x,y)\|=a_i(x,y),\,\,\forall\,x,y\in X.
$$
Then,
$$
\sum_{i=1}^4 \|A_i(x,y)\|+\|A_4(x,y)\| \leq \alpha,\,\,\forall\,x, y\in X
$$
and condition (\ref{i1}) of Theorem \ref{T} is satisfied.
\item[$\bullet$] For all $x,y\in X$, we have:
$$
S(x,y)t =\frac{a_1(x,y)+a_2(x,y)+a_4(x,y)}{1-a_3(x,y)-a_4(x,y)}\,t,\,\,\forall\,t\in \R.
$$
Then, for all $x, y\in X$, we have:
$$
\|S(x,y)\|=\frac{a_1(x,y)+a_2(x,y)+a_4(x,y)}{1-a_3(x,y)-a_4(x,y)}.
$$
Since $\alpha\in [0,1)$, we have:
$$
a_1(x,y)+a_2(x,y)+a_4(x,y)+\alpha a_3(x,y)+\alpha a_4(x,y)\leq \alpha,\,\,\forall\,x,y\in X.
$$
Then,
$$
\|S(x,y)\|\leq \alpha,\,\,\forall\,x,y\in X
$$
and condition (\ref{i2}) of Theorem \ref{T} holds with $\beta=\alpha$.
\item[$\bullet$] Conditions (\ref{i3}), (\ref{hb}) and (\ref{i4}) are easy to check.
\item[$\bullet$] For all $x,y\in X$, we have:
$$
(I-A_3(x,y)-A_4(x,y))^{-1}s= \frac{s}{1-a_3(x,y)-a_4(x,y)},\,\,\forall\,s\in \R.
$$
Since $a_3(x,y)+a_4(x,y)< 1$ for all $x,y\in X$, then
$$
s\geq 0 \Rightarrow (I-A_3(x,y)-A_4(x,y))^{-1}s\geq 0.
$$
Hence, condition (\ref{i5}) of Theorem \ref{T} is satisfied.
\end{itemize}
Now, we are able to apply Theorem \ref{T} and then, $T$ has a unique fixed point.
\end{proof}

\section{Open problem}
We present the following open problem.

In hypothesis (\ref{i1}), we assumed that $\alpha \in [0, 1/k)$, where $k$ is the normal constant of the cone $P$.
What can we say about the case when $\alpha\in [1/k,1)$ with $k>1$?



\bibliographystyle{9}

\end{document}